\documentclass{amsart}
\usepackage{amssymb, amsmath}
\usepackage[dvips]{graphicx}
\usepackage{amsfonts}
\usepackage{latexsym}
\usepackage{color}
\newtheorem{theorem}{Theorem}	
\newtheorem{lemma}{Lemma}[section]		
\newtheorem{corollary}{Corollary}		
\newtheorem{proposition}{Proposition}

\begin{document}
\title[Approximation of spherical convex bodies of constant width $\pi/2$]
{Approximation of spherical convex bodies of constant width $\pi/2$}
\author{Huhe Han}
\address{College of Science, Northwest Agriculture and Forestry University, China}
\email{han-huhe@nwafu.edu.cn}
\subjclass[2020]{52A20, 52A55}
\keywords{Constant width, approximation,  
spherical polytope, Hausdorff distance}
\begin{abstract}
Let $C\subset \mathbb{S}^2$ be a spherical convex body of constant width $\tau$.
It is known that 
(i) if $\tau<\pi/2$ then for any $\varepsilon>0$ 
there exists a spherical convex body $C_\varepsilon$ of constant width $\tau$ 
whose boundary consists only of arcs of circles of radius $\tau$ 
such that 
the Hausdorff distance between $C$ and $C_\varepsilon$ is at most $\varepsilon$;
(ii) if $\tau>\pi/2$ then for any $\varepsilon>0$ 
there exists a spherical convex body $C_\varepsilon$ of constant width $\tau$ 
whose boundary consists only of arcs of circles of radius $\tau-\frac{\pi}{2}$ and great circle arcs
such that 
the Hausdorff distance between $C$ and $C_\varepsilon$ is at most $\varepsilon$.
In this paper, we present an approximation of the remaining case $\tau=\pi/2$, 
that is, if $\tau=\pi/2$ then for any $\varepsilon>0$ 
there exists a spherical polytope $\mathcal{P}_\varepsilon$ of constant width $\pi/2$  
such that 
the Hausdorff distance between $C$ and $\mathcal{P}_\varepsilon$ is at most $\varepsilon$.
\end{abstract}
\maketitle
\section{introduction}
Blaschke presented an approximation theorem as 
for every convex body of constant width $\tau>0$ in the Euclidean
plane $\mathbb{R}^2$ and every  $\varepsilon > 0$ there exists a convex body of constant width 
$\tau$ whose boundary consists only of pieces of circles of radius $\tau$ 
such that the Hausdorff distance between the two
bodies is at most $\varepsilon$ (see \cite{blaschke15, bonnesen-fenchel87}).
A generalization of this fact for normed planes
is given by Lassak (\cite{lassak12}). 
In \cite{Lassak22-1}, Lassak presented an approximation of spherical reduced body of thickness below 
$\pi/2$. As a corollary of this fact, also a spherical version of the theorem of Blaschke is presented as follows:
\begin{theorem}[\cite{Lassak22-1}]\label{theorem1}
For any spherical convex body $C\subset \mathbb{S}^2$ of constant width $\tau<\pi/2$, 
and for any $\varepsilon>0$ 
there exists a body $C_\varepsilon$ of constant width $\tau$ 
whose boundary consists only of arcs of circles of radius $\tau$ 
such that 
\[
h(C, C_\varepsilon)\leq \varepsilon,
\] 
where $h(C_1, C_2)$ means the Hausdorff distance between $C_1$ and $C_2$ .
\end{theorem}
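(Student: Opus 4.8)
The plan is to transfer Blaschke's classical construction to the sphere, exploiting the fact that for $\tau<\pi/2$ a spherical cap $B(x,\tau)=\{y\in\mathbb{S}^2:\ d(x,y)\le\tau\}$ of radius $\tau$ (with $d$ the geodesic distance) is itself a convex body, since it lies in an open hemisphere. The starting point is the characterization of constant width through diametral completeness: a spherical convex body $C$ has constant width $\tau<\pi/2$ if and only if it has diameter $\tau$ and is \emph{diametrically complete}, meaning that no point can be adjoined to $C$ without producing two points at distance greater than $\tau$; equivalently, $C=\bigcap_{x\in C}B(x,\tau)$. First I would establish this equivalence on $\mathbb{S}^2$, together with the spherical completion theorem: every subset of diameter $\tau<\pi/2$ is contained in a diametrically complete set of the same diameter.

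With these tools, the construction proceeds in two moves. First, select a finite $\delta$-net $p_1,\dots,p_n$ on $\partial C$, with $\delta$ fixed later, and let $P=\mathrm{conv}\{p_1,\dots,p_n\}$ be the inscribed spherical polygon; since the vertices lie in $C$, the diameter of $P$ is at most $\tau$, and $h(C,P)\to0$ as $\delta\to0$. Second, apply the completion theorem to enlarge $P$ to a diametrically complete body $C_\varepsilon\supseteq P$ of diameter $\tau$; by the equivalence above, $C_\varepsilon$ then has constant width $\tau$. To read off the boundary structure, observe that $C_\varepsilon=\bigcap_{x\in C_\varepsilon}B(x,\tau)$, so through every boundary point runs a diametral chord whose far endpoint $x$ is a center for which the boundary lies locally on $\partial B(x,\tau)$, a circle of radius $\tau$. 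Because the completion of a polygon introduces only finitely many such centers, $\partial C_\varepsilon$ is a finite union of arcs of circles of radius $\tau$; here the hypothesis $\tau<\pi/2$ is essential, as it is precisely the regime in which these caps are convex and no great-circle arcs can intrude.

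It remains to estimate $h(C,C_\varepsilon)$. Since $C\approx P\subseteq C_\varepsilon$, every point of $C$ lies near $C_\varepsilon$; for the reverse inclusion one must guarantee that the completion does not protrude far beyond $C$, which I would control by noting that $C_\varepsilon$ has diameter $\tau$ and contains the dense net drawn from $\partial C$, confining $C_\varepsilon$ to a thin neighbourhood of $C$. Taking $\delta$ small enough then yields $h(C,C_\varepsilon)\le\varepsilon$. The main obstacle is the completion step: one must verify that on $\mathbb{S}^2$ the completion exists, that it displaces the boundary by no more than a quantity tending to $0$ with $\delta$, and---most delicately---that it produces a boundary built exclusively from arcs of radius $\tau$. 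Establishing that the completion of an inscribed polygon is a genuine spherical Reuleaux-type body, with arcs of exactly the constant-width radius and no extraneous geodesic pieces, is the crux, and is exactly where the restriction $\tau<\pi/2$ does the real work.
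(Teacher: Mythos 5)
This statement is quoted in the paper from Lassak's article and is not proved there; Lassak's own argument approximates \emph{reduced} bodies of thickness below $\pi/2$ by an explicit construction (intersections of caps of radius $\tau$ centered at suitably chosen boundary points), and the constant-width case falls out as a corollary. Your plan replaces that explicit construction by an abstract completion step, and that is exactly where it breaks.

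The gap is the step you yourself call the crux: passing from ``$C_\varepsilon$ is diametrically complete of diameter $\tau$'' to ``$\partial C_\varepsilon$ is a finite union of arcs of circles of radius $\tau$.'' Completeness gives $C_\varepsilon=\bigcap_{x\in C_\varepsilon}B(x,\tau)$, but the intersection runs over \emph{all} points of the body, not finitely many centers, and a completion of a polygon is in no way forced to be a Reuleaux-type polygon. Concretely: a spherical cap of geodesic radius $\tau/2$ is a body of constant width $\tau$ (hence complete with diameter $\tau$), and its boundary is a single circle of radius $\tau/2$, containing no arc of radius $\tau$ whatsoever. If the body $C$ being approximated is such a cap, then $C$ itself is a valid completion of every inscribed polygon $P$, so ``apply the completion theorem'' may simply return $C$ --- which satisfies every requirement of your construction except the boundary structure that the theorem is actually about. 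Your pointwise justification is also incorrect: if $y\in\partial C_\varepsilon$ and $x$ is the far endpoint of a diametral chord at $y$, then indeed $y\in\partial B(x,\tau)$, but near $y$ the body merely lies \emph{inside} the cap $B(x,\tau)$ and touches its boundary circle at the single point $y$; the boundary ``lies locally on $\partial B(x,\tau)$'' only along arc portions whose existence is precisely what is to be proved, and likewise ``only finitely many such centers'' is unjustified. To repair the argument you would have to build the arc structure into the construction rather than extract it from completeness --- for instance, choose $p_1,\dots,p_n\in\partial C$ in diametral pairs (each $p_i$ paired with some $p_j$ at distance exactly $\tau$) and set $C_\varepsilon=\bigcap_{i}B(p_i,\tau)\supseteq C$, whose boundary automatically consists of finitely many arcs of radius $\tau$; the nontrivial remaining task, showing that this intersection has diameter $\tau$ and hence constant width $\tau$, is the real content of Blaschke's and Lassak's theorems, and your proposal does not supply it. The peripheral ingredients you invoke (constant width $\Leftrightarrow$ completeness for $\tau<\pi/2$, existence of completions on $\mathbb{S}^2$, the Hausdorff estimate via $\mathrm{diam}\,C_\varepsilon=\tau$ and the inscribed net) are all sound or readily fixable; the proof fails only, but fatally, at its center.
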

A counterpart of Theorem \ref{theorem1} is given as follows: 
\begin{theorem}[\cite{hanam}]\label{theorem2}
For any spherical convex body $C\subset \mathbb{S}^2$ of constant width $\tau>\pi/2$, 
and for any $\varepsilon>0$ 
there exists a body $C_\varepsilon$ of constant width $\tau$ 
whose boundary consists only of 
arcs of circles of radius $\tau-\frac{\pi}{2}$ and great circle arcs, 
such that 
\[
h(C, C_\varepsilon)\leq\varepsilon,
\] 
where $h$ is the Hausdorff distance.
\end{theorem}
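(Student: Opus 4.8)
The plan is to prove Theorem \ref{theorem2} by spherical polar duality, reducing it to the already-established Theorem \ref{theorem1}. For a spherical convex body $A\subset\mathbb{S}^2$ write $A^{*}=\{y\in\mathbb{S}^2: d(x,y)\le \pi/2\ \text{for all}\ x\in A\}$ for its polar dual, where $d$ denotes geodesic distance, and let $D(c,r)$ denote the spherical disk of radius $r$ centered at $c$. I would first record the three structural properties of $*$ on which everything rests: (i) the bipolar identity $(A^{*})^{*}=A$ for convex bodies; (ii) the width-reversal, namely $A$ has constant width $\tau$ if and only if $A^{*}$ has constant width $\pi-\tau$; and (iii) the De Morgan law $\bigl(\bigcap_i A_i\bigr)^{*}=\mathrm{conv}\bigl(\bigcup_i A_i^{*}\bigr)$ together with the cap formula $D(c,r)^{*}=D\bigl(c,\tfrac{\pi}{2}-r\bigr)$ valid for $0<r\le \pi/2$. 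Property (ii) is consistent with the self-dual case $\tau=\pi/2$ and with disks, for which $D(c,r)$ has constant width $2r$ and dual $D\bigl(c,\tfrac{\pi}{2}-r\bigr)$ of constant width $\pi-2r$.

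Granting these, the argument is short. Given $C$ of constant width $\tau>\pi/2$, the body $C^{*}$ has constant width $\pi-\tau<\pi/2$ by (ii), so Theorem \ref{theorem1} applies to $C^{*}$: for any $\delta>0$ there is a body $K_\delta$ of constant width $\pi-\tau$ whose boundary consists only of arcs of circles of radius $\pi-\tau$ and with $h(C^{*},K_\delta)\le\delta$. Such a $K_\delta$ is a spherical Reuleaux-type polygon, i.e. $K_\delta=\bigcap_i D(v_i,\pi-\tau)$ for its finitely many vertices $v_i$. I then set $C_\varepsilon=(K_\delta)^{*}$. By (ii) and $\pi-(\pi-\tau)=\tau$ the body $C_\varepsilon$ has constant width $\tau$, and by (i) we have $h(C,C_\varepsilon)=h\bigl((C^{*})^{*},(K_\delta)^{*}\bigr)$. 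Since $*$ is continuous with respect to the Hausdorff distance at the fixed nondegenerate body $C^{*}$ (its width $\pi-\tau$ is bounded away from $0$ and $\pi$), there is a $\delta=\delta(\varepsilon)>0$ making the right-hand side at most $\varepsilon$; choosing $K_\delta$ within this $\delta$ of $C^{*}$ completes the distance estimate.

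It remains to read off the boundary of $C_\varepsilon$, and this is where the two prescribed ingredients appear automatically. By the De Morgan law (iii) and the cap formula,
\[
C_\varepsilon=(K_\delta)^{*}=\Bigl(\bigcap_i D(v_i,\pi-\tau)\Bigr)^{*}=\mathrm{conv}\Bigl(\bigcup_i D\bigl(v_i,\tau-\tfrac{\pi}{2}\bigr)\Bigr),
\]
because $\tfrac{\pi}{2}-(\pi-\tau)=\tau-\tfrac{\pi}{2}$. The boundary of a spherical convex hull of finitely many disks of radius $\tau-\tfrac{\pi}{2}$ consists exactly of sub-arcs of those disk boundaries, which are circles of radius $\tau-\tfrac{\pi}{2}$, joined by the common outer tangent geodesics, which are great circle arcs. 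Thus $\partial C_\varepsilon$ consists only of arcs of circles of radius $\tau-\tfrac{\pi}{2}$ and great circle arcs, as required. Dually, each great circle arc corresponds to a vertex $v_i$ of $K_\delta$, reflecting the vertex--edge exchange effected by $*$.

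The main obstacle will be the careful justification of properties (i)--(iii) and of the continuity of $*$ in the spherical (rather than Euclidean) setting, where one must keep all bodies inside an open hemisphere and verify that $C^{*}$ and $(K_\delta)^{*}$ are genuine nondegenerate convex bodies. In particular I expect the width-reversal (ii) --- equivalently the statement that polar duality carries the diametral chords of $C$ to the supporting lune structure of $C^{*}$ --- and the quantitative continuity estimate $h(A^{*},B^{*})\le L(\tau)\,h(A,B)$ near $C^{*}$ to require the most work; the De Morgan law (iii) and the cap computation are then routine. Once (i)--(iii) and continuity are in hand, the approximation follows as above with no further analysis of $C$ itself.
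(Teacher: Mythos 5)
Your proposal is correct and its core strategy is exactly that of the paper's proof of this statement in \cite{hanam}: dualize $C$, note via the width-reversal lemma of \cite{hwam} that $C^\circ$ has constant width $\pi-\tau<\pi/2$, approximate $C^\circ$ by Theorem \ref{theorem1}, and dualize back. The two supporting steps, however, are handled by genuinely different lemmas, and the comparison is instructive. For the distance estimate, you invoke only continuity of the polar map near $C^*$ and flag the quantitative Lipschitz bound $h(A^*,B^*)\le L(\tau)\,h(A,B)$ as your main obstacle; the paper instead quotes the stronger known fact that the spherical dual transform is an \emph{exact isometry} for the Hausdorff distance (Han and Nishimura, \emph{Studia Math.} \textbf{245} (2019), 201--211), so that $h(C,C_\varepsilon)=h(C^\circ,C_\varepsilon^\circ)\le\varepsilon$ with no continuity analysis at all — you can simply cite this and delete your hardest step. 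For the boundary structure, the paper argues locally: under the support correspondence $\partial K\to\partial K^\circ$, a relative interior point of an arc of radius $\delta$ maps into an arc of radius $\frac{\pi}{2}-\delta$, while a corner point maps onto a great circle segment, via an angle identity at non-smooth points; your route is global, writing $K_\delta=\bigcap_i D(v_i,\pi-\tau)$ and applying the De Morgan law and the cap formula to get $C_\varepsilon=\mathrm{conv}\bigl(\bigcup_i D(v_i,\tau-\frac{\pi}{2})\bigr)$, which buys a cleaner explicit picture (hull of congruent disks, dual to an intersection of disks). But your route uses one assertion the paper never needs and you do not justify: that a body of constant width $\pi-\tau$ whose boundary consists of finitely many arcs of circles of radius $\pi-\tau$ equals the intersection of the disks of that radius centered at the arc centers. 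This is true and routine: by Proposition \ref{diameter-width} the body has constant diameter $\pi-\tau$, so each arc center is a diametral point lying in the body, whence the body is contained in each disk; conversely every boundary point of the body lies on one of the circles $\partial D(v_i,\pi-\tau)$, hence is a boundary point of the intersection, and equality of the two convex bodies follows. With that lemma supplied (and with the isometry result replacing your continuity argument), your proof is complete and matches the published one in all essentials.
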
 
In the Euclidean plane, a {\it PC curve}, or {\it piecewise circular
curve}, is defined as  a finite sequence of circular arcs or line segments, with the
endpoint of one arc coinciding with the beginning point of the next (\cite{banchoffgiblin94, banchoffgiblin93}). 
Then the boundary of $C_\varepsilon$ from Theorem \ref{theorem2} is a spherical analog of PC curve.
\par
In this paper, we present an approximation of the remaining case $\tau=\pi/2$.
\begin{theorem}\label{theorem3}
For any spherical convex body $C\subset \mathbb{S}^2$ of constant width $\pi/2$, 
and for any $\varepsilon>0$ 
there exists a spherical polytope $\mathcal{P}_\varepsilon$ of constant width $\pi/2$ , 
such that 
\[
h(C, \mathcal{P}_\varepsilon)\leq\varepsilon,
\] 
where $h$ is the Hausdorff distance.
\end{theorem}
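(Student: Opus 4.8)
The plan is to reduce Theorem \ref{theorem3} to a statement about self-dual spherical polygons and then to build such a polygon in a thin shell around $C$. For a spherical convex body $A\subset\mathbb{S}^2$ write $A^{*}=\{y\in\mathbb{S}^2:\langle x,y\rangle\ge 0 \text{ for all } x\in A\}$ for its polar dual, the intersection of all closed hemispheres $H(u)=\{y:\langle u,y\rangle\ge0\}$ centred at points $u\in A$. The first step records the characterisation that drives everything: a spherical convex body has constant width $\pi/2$ if and only if it is self-dual, $A=A^{*}$. This can be derived from the observation that the pole of a supporting great circle of $A$ lies at distance exactly $\pi/2$ from its contact point (every point of a great circle is at distance $\pi/2$ from the pole), so for a width-$\pi/2$ body the outer normals are again boundary points and the body coincides with the intersection of the hemispheres they determine. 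In particular a spherical polytope has constant width $\pi/2$ precisely when the set of poles of the great circles carrying its edges equals its vertex set; the spherical octant, the triangle $e_1e_2e_3$ with three right angles, is the basic example (and is exactly the degenerate $\tau-\tfrac\pi2\to0$ shape behind Theorem \ref{theorem2}). Thus it suffices to produce, for each $\varepsilon>0$, a self-dual polytope $\mathcal P_\varepsilon$ with $h(C,\mathcal P_\varepsilon)\le\varepsilon$.

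Second, I would set up the approximating shell. Choosing a sufficiently fine net of boundary points $u_1,\dots,u_n\in\partial C$ (equivalently, a fine family of supporting great circles with these poles), put $Q=\mathrm{conv}(u_1,\dots,u_n)$ and $Q^{*}=\bigcap_{i=1}^{n}H(u_i)$. Since $C$ has constant width $\pi/2$ its diameter equals $\pi/2$, so all mutual distances of the $u_i$ are at most $\pi/2$, each $u_j$ lies in every $H(u_i)$, and hence $Q\subseteq Q^{*}$; moreover $Q\subseteq C=C^{*}\subseteq Q^{*}$. As the net is refined $h(Q,C)\to0$, and by continuity of the polar map $h(Q^{*},C)=h(Q^{*},C^{*})\to0$ as well. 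Consequently any convex body lying in the shell between $Q$ and $Q^{*}$ is within $\varepsilon$ of $C$ once $n$ is large, so the problem is reduced to exhibiting a \emph{self-dual} polytope inside this shell.

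Third comes the construction of that polytope, which I expect to be the main obstacle. The naive candidates fail: $Q$ and $Q^{*}$ are interchanged by the order-reversing involution $A\mapsto A^{*}$ and neither is a fixed point, because the poles of the edges of $Q^{*}$ (the intersection points of consecutive supporting circles) are the vertices of the \emph{other} polytope and are not themselves admissible poles. The difficulty is exactly that forcing every edge-pole to be a vertex is a coupled, non-local condition. My plan is to phrase self-duality as a fixed-point equation: for a fixed combinatorial type (number of vertices $n$ and a rotation shift $k$, with the width-partner correspondence on $\partial C$ suggesting $k\approx n/2$) define the map $T$ sending a vertex tuple $(x_1,\dots,x_n)$ to the tuple whose $j$-th entry is the outward pole of the great circle through $x_{j-k}$ and $x_{j-k+1}$. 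A fixed point of $T$ is a polytope whose vertices are precisely the poles of its edges, i.e. a self-dual polytope of constant width $\pi/2$. Starting from the configuration given by the $u_i$ together with their width-partners on $\partial C$, which is an approximate fixed point of $T$, I would apply a Brouwer- or contraction-type argument on a small neighbourhood to obtain an exact fixed point and then verify that it is a genuine convex polygon lying between $Q$ and $Q^{*}$.

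The crux is therefore the quantitative stability of this last step: showing that $T$ carries a small neighbourhood of the ideal width-partner configuration into itself, so that an exact self-dual polytope exists arbitrarily close to $C$, and that the resulting vertex set assembles into a convex spherical polygon rather than a self-intersecting one. Once existence and convexity within the shell are established, the estimate $h(C,\mathcal P_\varepsilon)\le\varepsilon$ follows immediately from the sandwich $Q\subseteq\mathcal P_\varepsilon\subseteq Q^{*}$ and $Q\subseteq C\subseteq Q^{*}$ together with $h(Q,Q^{*})\to0$.
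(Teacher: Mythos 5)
Your first two steps are sound: the self-duality characterization (this is Proposition \ref{propself} in the paper) and the sandwich $Q\subseteq C=C^\circ\subseteq Q^\circ$ with $h(Q,Q^\circ)\to 0$ are correct and correctly argued. But the proof has a genuine gap exactly where you place the ``crux'': the existence of a self-dual polygon inside the shell is never established, only \emph{reformulated} as a fixed-point problem for the map $T$ sending a vertex tuple to the tuple of edge poles. That reformulation is not progress by itself, and neither tool you invoke applies off the shelf. A contraction argument is structurally impossible: $T$ is equivariant under the diagonal action of $SO(3)$ (rotations preserve poles of great circles), so every fixed point of $T$ sits inside a three-dimensional orbit of fixed points; fixed points are never isolated, hence $T$ cannot be a contraction on any neighbourhood of one, and its linearization has eigenvalue $1$ with multiplicity at least $3$. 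A Brouwer argument requires a compact convex set of configurations that $T$ maps into itself; because of these neutral directions such invariance is not automatic for small neighbourhoods, and producing an invariant set is precisely the ``quantitative stability'' you defer. Finally, even granting a fixed point, you would still need to rule out degenerate or self-intersecting closed polygons, which you acknowledge but do not address. So the entire difficulty of the theorem is concentrated in the step left open.

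For contrast, the paper never solves a global existence problem: it performs a local surgery that preserves constant width $\pi/2$ \emph{exactly} at each step. Given a strictly convex arc $\widehat{P_1P_2}$ of $\partial C$, it replaces that arc by its chord $P_1P_2$ and adjoins as a new vertex the pole $R_1$ of the great circle through $P_1,P_2$ (chosen within $\varepsilon$ of $C$); by self-duality the boundary then automatically loses the dual arc $\widehat{Q_1Q_2}$, where $Q_i$ is a diametral partner of $P_i$, and gains the two great-circle segments $R_1Q_1$ and $R_1Q_2$. Constant width of the new body is verified through the equivalence of constant width and constant diameter (Proposition \ref{diameter-width}) together with the intersecting-chords lemma, the Hausdorff error of each surgery is bounded by the thickness of a thin lune, and finitely many such surgeries (with errors summed by a geometric series) yield the polytope. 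In other words, the paper enforces self-duality by construction, one arc at a time, rather than hoping it emerges from a stability argument; if you want to rescue your global approach, you would need to supply the invariant set for $T$ (for instance by working modulo the $SO(3)$ symmetry), and that is a substantial piece of work that the proposal does not contain.
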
 
In \cite{hanam}, 
a conjecture was posted as $\lq\lq$any spherical convex body of constant width $\pi/2$ in $\mathbb{S}^n$ can be
approximated by a sequence of spherical convex polytopes of constant width
$\pi/2$.". 
Thus, Theorem \ref{theorem3} asserts that the conjecture is true if $n=2$.
\par 
\bigskip
This paper is organized as follows. In Section 2, preliminaries are given. The
proofs of Theorem \ref{theorem3} is given in Section 3.
\section{preliminaries}
Let $\mathbb{S}^n$ be the unit sphere centered at the origin of $n+1$ dimensional Euclidean space $\mathbb{R}^{n+1}$. 
For any point $P$ of $\mathbb{S}^n$, the hemisphere centered at $P$ is denoted by $H(P)$, that is 
\[
H(P)=\{Q\in \mathbb{S}^n\mid P\cdot Q\geq 0\},
\]
where the dot in the center stands for the standard dot product of two vectors
$P,Q\in \mathbb{R}^{n+1}$. 
For any points $P,Q$ of $\mathbb{S}^n$ and $P\neq -Q$, denote by $PQ$ the spherical great circle arc with end points $P, Q$ such that $-P, -Q$ are not contained in it. 
We say a non empty subset $W$ of $\mathbb{S}^n$ is {\it hemispherical} if there exists a point 
$P\in\mathbb{S}^n$ such that $W\cap H(P)=\emptyset$;
a hemispherical subset $C$ is {\it spherical convex} if $PQ$ is contained in $C$ for any $P,Q\in C$;
and a convex set $C$ is a {\it spherical convex body} if it contains an interior point.
Denote by $\partial C$ the boundary of spherical convex body $C$.
Let $P$ be a point of $\partial C$. 
If 
\[
C\subset H(Q)\ \mbox{and}\ P\in C\cap \partial H(Q),
\] 
then we say the hemisphere $H(Q)$ {\it supports} $C$ at $P$, and $H(Q)$ is a {\it supporting hemisphere} of $C$ at $P$. 
The intersection $H(P)\cap H(Q)$ is called a {\it lune} of $\mathbb{S}^{n}$, where $P\neq Q$. 
The {\it thickness of the lune} $H(P)\cap H(Q)$ is given by 
$
\pi-\arccos (P\cdot Q),
$
and denoted by $\Delta (H(P)\cap H(Q))$. 
If $H(P)$ is a supporting hemisphere of a spherical convex body $C$, 
{\it the width of} $C$ with respect to $H(P)$, denoted by $\mbox{width}_{H(P)}(C)$, is defined by 
\[
\mbox{width}_{H(P)} (C) =\mbox{min}\{\Delta(H(P)\cap H(Q))| H(Q)\ \mbox{supports}\ C\}.
\] We say that a spherical convex body $C$ is {\it of constant width},
if all widths of $C$ 
are equal. 
Following \cite{Lassak15}, 
 {\it the thickness} of a convex body $C\subset \mathbb{S}^n$, denoted by $\Delta(C)$,  is 
the minimum of $\mbox{width}_{H(P)} (C)$ over all supporting hemispheres $H(P)$ of $C$. Namely, 
\[
\Delta(C) = \mbox{min}\{\mbox{width}_{K}(C)| K\ \mbox{is\ a\ supporting\ hemisphere\ of\ }C\}.
\]
We say that a spherical convex body $C$ is of constant diameter $\tau> 0$, if
the diameter of $C$ is $\tau$, and for every point $P$ on the boundary of $C$ there
exists a point $Q$ of $C$ such that $\arccos (P\cdot Q)=\tau$.
\par
For any non-empty subset $W\subset \mathbb{S}^{n}$, the {\it spherical polar set of $W$}, denoted by 
$W^\circ$, is defined as follows: 
\[
W^\circ = \bigcap_{P\in W}H(P).
\]  
It is well known that if $W$ is spherical convex then $W=(W^\circ)^\circ$. 
We say a spherical convex body $C$ is {\it self-dual} if $C=C^\circ$.
\begin{proposition}[\cite{nishimurasakemi2}]\label{propself}
Let $C\subset \mathbb{S}^n$ be a spherical convex body. 
Then, $C=C^\circ$ 
if and only if $C$ is of constant width $\pi/2$.
\end{proposition}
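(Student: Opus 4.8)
The plan is to reduce everything to the self-duality characterisation of Proposition \ref{propself}: a spherical convex body has constant width $\pi/2$ exactly when it is self-dual, and the key arithmetic fact is that $\mathrm{diam}(A)\le \pi/2$ is equivalent to $A\subseteq A^\circ$ (since $\arccos(P\cdot Q)\le \pi/2$ means $P\cdot Q\ge 0$, i.e. $Q\in H(P)$). Thus constant width $\pi/2$ is the \emph{maximal} diameter-$\pi/2$ case, and I would approximate $C$ by \emph{completing} an inscribed polygon.

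First, I would fix $\delta>0$ and choose finitely many points $P_1,\dots,P_k\in\partial C$ forming a $\delta$-net of $\partial C$, and set $G=\mathrm{conv}\{P_1,\dots,P_k\}$. Then $G\subseteq C$, and since every point on $\partial C$ is within $\delta$ of some $P_i$, the inscribed polygon $G$ satisfies $h(C,G)\to0$ as $\delta\to0$. Because $G\subseteq C$ and $C$ has diameter $\pi/2$, we get $\mathrm{diam}(G)\le \pi/2$, i.e. $G\subseteq G^\circ$. Moreover $G\subseteq C=C^\circ$ gives $C=C^\circ\subseteq G^\circ$, and continuity of the polar map (on bodies with interior inside a fixed open hemisphere) yields $h(G^\circ,C^\circ)=h(G^\circ,C)\to0$. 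Hence both $G$ and the polytope $G^\circ$ converge to $C$, from inside and outside respectively.

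Second, I would complete $G$ to a self-dual body. Consider maximal convex bodies $M$ with $G\subseteq M$ and $M\subseteq M^\circ$ (diameter $\le\pi/2$); one exists by Zorn's lemma, since the condition $M\subseteq M^\circ$ is equivalent to all pairwise dot products of points of $M$ being nonnegative, a property preserved under nested unions and under spherical convex hulls. For such a maximal $M$ we must have $M=M^\circ$: if some $x\in M^\circ\setminus M$ existed, then $x\cdot p\ge0$ for all $p\in M$, so $M\cup\{x\}$ and hence $\mathrm{conv}(M\cup\{x\})$ would still have all pairwise dot products nonnegative, contradicting maximality. By Proposition \ref{propself}, $M$ has constant width $\pi/2$. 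Finally, $G\subseteq M$ forces $M=M^\circ\subseteq G^\circ$, so $G\subseteq M\subseteq G^\circ$; combined with the previous paragraph this sandwich gives $h(C,M)\le\max\{h(C,G),h(C,G^\circ)\}$, which is $\le\varepsilon$ once $\delta$ is small enough.

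The one genuinely new point -- and the step I expect to be the main obstacle -- is that the completion $M$ can be taken to be a \emph{spherical polytope}. This is exactly where $\tau=\pi/2$ is special: the completion of a body is bounded by arcs of circles of the width radius (as reflected in Theorems \ref{theorem1} and \ref{theorem2}), and a circle of radius $\pi/2$ on $\mathbb{S}^2$ is a great circle. Thus the new boundary pieces created by completing $G$ are great-circle arcs rather than genuine small circles, so $M$ is bounded by great-circle arcs. I would then show there are only finitely many of them by analysing the extreme points of $M$: for self-dual $M$ one has $M=\bigcap_{x\in\mathrm{ext}(M)}H(x)$, and the supporting great circles producing facets of $M$ are carried by the vertices of $G$ together with finitely many new vertices lying on $\partial G^\circ$ (in the model case where $G$ is the quarter great-circle arc $P_1P_2$, the completion is the self-dual octant $\mathrm{conv}\{P_1,P_2,P_3\}$, whose two new edges lie on the great circles $\partial H(P_1)$ and $\partial H(P_2)$). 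Making this finiteness rigorous -- bounding the number of completion arcs in terms of the vertices of $G$ -- is the crux; once it is in hand, setting $\mathcal{P}_\varepsilon=M$ completes the proof.
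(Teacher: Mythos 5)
You have not actually proved the statement at hand. Proposition \ref{propself} asserts the equivalence ``$C=C^\circ$ if and only if $C$ is of constant width $\pi/2$'' (a result the paper quotes from \cite{nishimurasakemi2} without proof), and any proof of it must establish both implications. Your text is instead an attempt at Theorem \ref{theorem3}, the approximation theorem, and it explicitly \emph{invokes} Proposition \ref{propself} twice: once at the outset, to translate constant width $\pi/2$ into self-duality, and once to conclude that your maximal body $M$ has constant width $\pi/2$. As a proof of Proposition \ref{propself} this is circular: nowhere do you derive $C\subseteq C^\circ$ and $C^\circ\subseteq C$ from the width condition, or conversely. A genuine argument would have to do something like combine the diameter--width equivalence (Proposition \ref{diameter-width}) with your own (correct) observation that $\mathrm{diam}(C)\leq\pi/2$ is equivalent to $C\subseteq C^\circ$, and then show that constant width $\pi/2$ forces the reverse inclusion $C^\circ\subseteq C$; no such step appears in your proposal.

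Even read charitably as an attempt at Theorem \ref{theorem3}, the decisive step is missing, as you yourself concede. Zorn's lemma produces a maximal $M$ with $G\subseteq M\subseteq M^\circ$, hence $M=M^\circ$, but it gives no control whatsoever on the boundary of $M$: a maximal completion of a polytope need not be a polytope (completions generically acquire strictly convex boundary arcs), and the slogan that ``circles of radius $\pi/2$ are great circles'' is a heuristic, not a proof that the completion has finitely many great-circle edges. The paper's actual proof of Theorem \ref{theorem3} sidesteps completion entirely: it truncates each strictly convex arc of $\partial C$ by inserting the intersection point $R_1$ of the supporting great circles $\partial H(P_1)$ and $\partial H(P_2)$, verifies constant \emph{diameter} $\pi/2$ of the truncated body directly via the chord-intersection argument (Lemma \ref{lemmaintersect}, which rests on Lemma \ref{lesspi/2}), and bounds the Hausdorff distance by the thickness of the lune $H(R)\cap H(R_1)$, iterating finitely many times. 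Your sandwich $G\subseteq M\subseteq G^\circ$ with $h(C,G)$ and $h(C,G^\circ)$ both small is an attractive alternative scheme, but it becomes a proof only if the polytopal self-dual completion exists --- and that existence is precisely the crux you leave open.
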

More details on spherical convex body of constant width, see for instance 
\cite{bezdek2023, hanrm, lassak22}.
\begin{lemma}[\cite{hanam}]
Let $C$ be a spherical convex body in $\mathbb{S}^{n}$. The
following two assertions are equivalent:
\begin{enumerate}
\item the hemisphere $H(P)$ supports $C^\circ$ at $Q$;
\item the hemisphere $H(Q)$ supports $C$ at $P$.
\end{enumerate}
\end{lemma}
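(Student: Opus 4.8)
The plan is to reduce each of the two assertions to one and the same explicit list of membership and orthogonality conditions, after which the equivalence is a tautology. First I would unwind the definition of a supporting hemisphere recorded in the preliminaries. By that definition, $H(Q)$ supports $C$ at $P$ precisely when the three conditions $C\subset H(Q)$, $P\in C$, and $P\in\partial H(Q)$ hold simultaneously; likewise, $H(P)$ supports $C^\circ$ at $Q$ precisely when $C^\circ\subset H(P)$, $Q\in C^\circ$, and $Q\in\partial H(P)$ hold.

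The next step is to translate the two containment conditions through the polar operation. Since $C^\circ=\bigcap_{R\in C}H(R)$, the condition $C\subset H(Q)$ says exactly that $R\cdot Q\geq 0$ for every $R\in C$, which is the same as $Q\in\bigcap_{R\in C}H(R)=C^\circ$; hence $C\subset H(Q)$ is equivalent to $Q\in C^\circ$. Applying the identical reasoning to the body $C^\circ$ in place of $C$, the condition $C^\circ\subset H(P)$ is equivalent to $P\in(C^\circ)^\circ$, and by the biduality $(C^\circ)^\circ=C$ valid for spherical convex bodies, this is equivalent to $P\in C$. For the incidence conditions I would simply note that $\partial H(Q)=\{R\in\mathbb{S}^n\mid R\cdot Q=0\}$, so $P\in\partial H(Q)$ is equivalent to $P\cdot Q=0$; by symmetry of the dot product this is the same as $Q\cdot P=0$, i.e.\ $Q\in\partial H(P)$. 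Assembling these translations, the assertion that $H(Q)$ supports $C$ at $P$ is equivalent to the triple
\[
P\in C,\qquad Q\in C^\circ,\qquad P\cdot Q=0,
\]
while the assertion that $H(P)$ supports $C^\circ$ at $Q$ is equivalent to exactly the same triple. The two assertions therefore coincide, which proves the lemma.

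The argument presents no genuine obstacle: every step is a direct unpacking of the definitions of the polar set, of a supporting hemisphere, and of the boundary of a hemisphere, combined with the symmetry of the dot product. The only external ingredient is the biduality $(C^\circ)^\circ=C$ for spherical convex bodies, stated in the preliminaries, and it is used in precisely one place, namely to convert the containment $C^\circ\subset H(P)$ back into the membership $P\in C$. I would take care to confirm that this biduality applies to the body $C$ at hand (a spherical convex body), which is all that the cited fact requires.
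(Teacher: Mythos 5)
Your proof is correct and complete: reducing both assertions to the symmetric triple $P\in C$, $Q\in C^\circ$, $P\cdot Q=0$ via the definition of the polar set, the symmetry of the dot product, and the biduality $(C^\circ)^\circ=C$ is exactly the standard argument, and you invoke biduality at the one place it is genuinely needed. Note that the paper itself gives no proof to compare against --- it imports this lemma from the cited reference \cite{hanam} --- so your definitional unwinding stands on its own; the only cosmetic addition one might make is to remark that the side condition $P\in\partial C$ (respectively $Q\in\partial C^\circ$) in the paper's definition of support is automatic, since a point of $C$ lying on $\partial H(Q)$ with $C\subset H(Q)$ is necessarily a boundary point of $C$.
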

\section{Proof of Theorem \ref{theorem3}}
\begin{proposition}[\cite{hwam}]\label{diameter-width}
Let $C$ be a spherical convex body in $\mathbb{S}^n$, and $0< \tau < \pi$. 
The following two statements are equivalent:
\begin{enumerate}
\item $C$ is of constant diameter $\tau$ .
\item $C$ is of constant width $\tau$.
\end{enumerate}
\end{proposition}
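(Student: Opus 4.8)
The plan is to translate both properties into statements about spherical distances and lune thicknesses, and then connect them through a single geometric object: a chord of $C$ that is perpendicular, at each endpoint, to a supporting hemisphere there (a spherical \emph{double normal}). Write $H(u)$ for a supporting hemisphere touching $C$ at $A$, so that $u\cdot A=0$ and $C\subset H(u)$, and recall $\mathrm{width}_{H(u)}(C)=\min_v\{\pi-\arccos(u\cdot v)\}$ over all supporting hemispheres $H(v)$. Condition (2) says every such width equals $\tau$, while condition (1) says $\mathrm{diam}(C)=\max_{X,Y\in C}\arccos(X\cdot Y)=\tau$ and that every boundary point $A$ admits some $B\in C$ with $\arccos(A\cdot B)=\tau$.

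The key computation is the following. If $A,B$ lie on a common great circle at spherical distance $d=\arccos(A\cdot B)$, and $u,v$ are the points of that great circle at distance $\pi/2$ from $A$ and from $B$ respectively, each taken on the side away from the other endpoint, then $u\cdot A=v\cdot B=0$ and a short spherical computation gives $\arccos(u\cdot v)=\pi-d$, so the lune $H(u)\cap H(v)$ has thickness $\pi-\arccos(u\cdot v)=d$. Thus a double-normal chord of length $d$ produces an opposite pair of supporting hemispheres whose lune has thickness exactly $d$. First I would record the standard facts that the diameter-realizing chord is such a double normal, and that for every boundary point $A$ its farthest point in $C$ is the endpoint of a chord perpendicular to a supporting hemisphere at that far end, since the concentric cap centered at $A$ supports $C$ there. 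I would then establish the lemma that the minimizing opposite hemisphere in the definition of $\mathrm{width}_{H(u)}(C)$ realizes the width as a genuine double-normal chord $AB$, so that $\mathrm{width}_{H(u)}(C)=\arccos(A\cdot B)$. Combined with the definitional bound $\Delta(C)=\min_u\mathrm{width}_{H(u)}(C)\le\mathrm{width}_{H(u)}(C)$, this yields $\mathrm{width}_{H(u)}(C)\le\mathrm{diam}(C)$ for every supporting hemisphere and $\max_u\mathrm{width}_{H(u)}(C)=\mathrm{diam}(C)$.

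From here (2)$\Rightarrow$(1) is short. If all widths equal $\tau$, the sandwich forces $\mathrm{diam}(C)=\tau$, and for each boundary point $A$ the supporting hemisphere at $A$ has its width $\tau$ realized by a double-normal chord $AB$ with $B\in C$ and $\arccos(A\cdot B)=\tau$, so $C$ has constant diameter $\tau$. For (1)$\Rightarrow$(2), constant diameter immediately gives $\mathrm{width}_{H(u)}(C)\le\mathrm{diam}(C)=\tau$ for every supporting hemisphere, so the whole content reduces to the reverse inequality $\mathrm{width}_{H(u)}(C)\ge\tau$, that is, $\Delta(C)=\tau$.

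Proving this lower bound is where I expect the genuine difficulty. A thin lune can still contain long chords along its length, so the thickness cannot be bounded below by exhibiting a single distance-$\tau$ chord; one needs the global completeness structure of constant-diameter bodies. The approach I would take is to show that a body of constant diameter $\tau$ is diametrically maximal, i.e. no point can be adjoined without exceeding diameter $\tau$, and that a width strictly below $\tau$ in some direction would produce a point admissible for such an enlargement, a contradiction. An alternative route I would try is through the polarity of the earlier Lemma, expressing the opposite supporting centers $v$ as boundary points of the polar body and bounding $\arccos(u\cdot v)\le\pi-\tau$ using the distance-$\tau$ partners guaranteed by constant diameter; controlling this angle for the \emph{minimizing} $v$, rather than merely for the perpendicular double normal, is the delicate point, and treating corners of $\partial C$ through the full cone of supporting hemispheres, while keeping every configuration inside a hemisphere so that spherical convexity applies, is where the argument must be made careful.
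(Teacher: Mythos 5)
First, a calibration point: the paper contains no proof of this proposition at all --- it is imported as a known tool with the citation \cite{hwam} --- so your attempt has to stand on its own as a proof of the Han--Wu theorem, and it does not. The decisive gap is the one you flag yourself: in (1)$\Rightarrow$(2) the entire content is the lower bound $\mathrm{width}_{H(u)}(C)\ge\tau$ for every supporting hemisphere, and at exactly that point you offer only two candidate strategies (diametral maximality, or polarity) without executing either. This is not a step that can be left as a remark: it is precisely the implication that remained open for $\tau<\pi/2$ after Lassak and Musielak settled the range $\tau\ge\pi/2$, and it is what the cited paper actually proves; the known argument passes through polar duality (under $C\mapsto C^\circ$ both width and diameter trade $\tau$ for $\pi-\tau$), reducing the open range to the known one. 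So the hard half of the equivalence is simply absent from your proposal.

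Second, the half you call short, (2)$\Rightarrow$(1), also fails as written. It pivots on the claim $\max_u\mathrm{width}_{H(u)}(C)=\mathrm{diam}(C)$, which you present as a consequence of the double-normal realization lemma; but that lemma yields only one inequality, $\mathrm{width}_{H(u)}(C)\le\mathrm{diam}(C)$, since it exhibits each width as the length of a chord of $C$. The reverse inequality is false for general spherical convex bodies once the diameter exceeds $\pi/2$: take a lune of small thickness $\delta$ truncated to a sliver whose central great-circle arc has length $d\in(\pi/2,\pi)$. Its diameter is at least $d$, the widths with respect to the two long sides equal $\delta$, and the width with respect to an end hemisphere $H(w)$ is only about $\pi/2$, because a side pole $v$ is nearly orthogonal to $w$, so $\Delta\bigl(H(w)\cap H(v)\bigr)\approx\pi/2$; hence every width is at most roughly $\pi/2<d$. (Your implicit estimate --- writing $e_B=A\sin d-t\cos d$ for the perpendicular pole at the far end of a chord and using $A\cdot v\ge 0$, $t\cdot v\le 1$ to get $e_B\cdot v\ge-\cos d$ --- works exactly when $\cos d\ge0$, i.e.\ $d\le\pi/2$.) So for $\tau>\pi/2$ your sandwich collapses. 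There are two further leaks: the realization lemma places the \emph{center} of the arc $\partial H(u)\cap H(v)$ in $C$, not your prescribed touching point $A$, and these can genuinely differ, because for constant width $\pi/2$ the set $C\cap\partial H(u)$ may be a nondegenerate great-circle arc (spherical polytopes of constant width $\pi/2$ exist --- that is the subject of this very paper); so ``every boundary point has a diametral partner'' needs a separate argument. And the farthest point $B$ from $A$ does admit a supporting hemisphere perpendicular to $AB$, but not by your cap argument: when $\arccos(A\cdot B)>\pi/2$ the cap centered at $A$ through $B$ is not contained in the tangent hemisphere at $B$, so one must instead use the first variation of distance along geodesics of $C$. (Minor: your $u$, $v$ in the key computation should be taken on the sides \emph{toward} the other endpoint; as written, the lune $H(u)\cap H(v)$ lies on the antipodal side and misses $C$, though the angle computation is unaffected.)
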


\begin{lemma}\label{lesspi/2}
Let $P, Q$ be two points of $\mathbb{S}^2$ such that $\arccos(P_1\cdot P_2)=\pi/2$. 
Let $P_3$ be a point of $H(P_1)\cap H(P_2)$. 
Then 
\[
\arccos(Q_1\cdot Q_2)<\pi/2,
\]
where $Q_1, Q_2$ are two points of spherical triangle $P_1,P_2,P_3$ such that $Q_i\neq P_j, i=1,2; j=1,2,3$.
\end{lemma}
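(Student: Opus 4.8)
The plan is to pass to the Euclidean inner product on $\mathbb{R}^3$ and reduce the desired inequality $\arccos(Q_1\cdot Q_2)<\pi/2$ to the single strict inequality $Q_1\cdot Q_2>0$. First I would record the pairwise inner products of the three vertices. By hypothesis $P_1\cdot P_2=\cos(\pi/2)=0$, and since $P_3\in H(P_1)\cap H(P_2)$ we have $P_1\cdot P_3\ge 0$ and $P_2\cdot P_3\ge 0$; together with $P_i\cdot P_i=1$ this shows that \emph{every} pairwise inner product $P_i\cdot P_j$ is non-negative, and that it is strictly positive whenever $i=j$.

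Next I would describe the points of the triangle algebraically. Because $P_1\cdot(P_1+P_2+P_3)=1+P_1\cdot P_3>0$ and likewise for $P_2$ and $P_3$, the three vertices lie in the open hemisphere centred at $(P_1+P_2+P_3)/|P_1+P_2+P_3|$; hence the convex cone they generate is pointed and the spherical triangle $P_1P_2P_3$ is exactly the radial projection of this cone onto $\mathbb{S}^2$. Thus each of $Q_1,Q_2$ can be written as $Q_k=x_k/|x_k|$ with $x_k=c_1^{(k)}P_1+c_2^{(k)}P_2+c_3^{(k)}P_3$, all $c_i^{(k)}\ge 0$ and $x_k\neq 0$. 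I would then expand
\[
x_1\cdot x_2=\sum_{i,j}c_i^{(1)}c_j^{(2)}\,(P_i\cdot P_j),
\]
which is a sum of non-negative terms by the previous paragraph, so that $Q_1\cdot Q_2\ge 0$ and in particular $\arccos(Q_1\cdot Q_2)\le \pi/2$; this already recovers that the diameter of the triangle does not exceed $\pi/2$.

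The crux is the strict inequality, and this is where the hypothesis $Q_i\neq P_j$ enters. Fix representations as above and set $S_k=\{\,i:c_i^{(k)}>0\,\}$. If some $S_k$ were a singleton $\{j\}$, then $x_k$ would be a positive multiple of $P_j$ and $Q_k=P_j$ would be a vertex; since $Q_1,Q_2$ are not vertices, both $S_1$ and $S_2$ have at least two elements. But two subsets of $\{1,2,3\}$ of size at least two cannot be disjoint, so there is an index $i_0\in S_1\cap S_2$. The corresponding diagonal term $c_{i_0}^{(1)}c_{i_0}^{(2)}(P_{i_0}\cdot P_{i_0})=c_{i_0}^{(1)}c_{i_0}^{(2)}$ is strictly positive while all other terms are non-negative, forcing $x_1\cdot x_2>0$ and hence $Q_1\cdot Q_2>0$, i.e.\ $\arccos(Q_1\cdot Q_2)<\pi/2$.

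The main obstacle I anticipate is not the computation but its two soft points: justifying cleanly that the triangle is the set of normalised non-negative combinations of its vertices (handled by exhibiting the common supporting direction $P_1+P_2+P_3$, which also guarantees $x_k\neq 0$ so the normalisation is legitimate), and making sure the strictness argument survives the degenerate configurations where $P_1\cdot P_3=0$ or $P_2\cdot P_3=0$. The pigeonhole step above is robust to these degeneracies precisely because it uses only the positivity of the diagonal inner products $P_i\cdot P_i$ and never the vanishing pattern of the off-diagonal ones; this is the feature that makes the argument uniform across all admissible positions of $P_3$.
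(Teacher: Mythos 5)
Your proof is correct, and it takes a genuinely different --- and more complete --- route than the paper's. The paper argues by setting, ``without loss of generality'', $P_1=(1,0,0)$, $P_2=(0,1,0)$, $P_3=(0,0,1)$; but the hypothesis only gives $P_1\cdot P_3\ge 0$ and $P_2\cdot P_3\ge 0$, so this normalisation really covers only the special case where $P_3$ is the pole of the great circle through $P_1,P_2$. (It can be repaired: after a reflection fixing $P_1$ and $P_2$ one may assume the triangle $P_1P_2P_3$ lies inside the octant triangle, and one checks $Q_1,Q_2$ are still non-vertex points of that larger triangle; the paper does not do this.) Your Gram-matrix argument needs no such reduction: its only inputs are the sign pattern of the inner products $P_i\cdot P_j$ and the conical description of the triangle, so every admissible position of $P_3$, including the degenerate ones, is handled uniformly. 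Moreover, at the decisive step the paper writes $Q_1\cdot Q_2=x_1x_2+y_1y_2+z_1z_2<1$ and concludes $\arccos(Q_1\cdot Q_2)<\pi/2$; as written this is a non sequitur, since an upper bound below $1$ only yields $\arccos(Q_1\cdot Q_2)>0$, whereas what is needed is the strict lower bound $Q_1\cdot Q_2>0$. That lower bound is exactly what your pigeonhole argument on the supports $S_1,S_2$ delivers (in the paper's coordinates: a non-vertex point of the octant triangle has at least two positive coordinates, two such subsets of $\{1,2,3\}$ must meet, and the common diagonal term is positive). So your proposal not only proves the lemma but supplies the two ingredients the published proof elides. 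The one point to make airtight in a final write-up is the identification of the spherical triangle with the set of normalised non-negative combinations of its vertices; your justification of pointedness via the common direction $P_1+P_2+P_3$ is the right one, and the identification itself follows by composing arcs (points of the arc $P_1P_2$ are normalised combinations $c_1P_1+c_2P_2$, and arcs from such points to $P_3$ then sweep out the whole cone), but it deserves those two lines.
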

\begin{proof}
Without loss of generality, 
set $P_1=(1, 0, 0), P_2=(0,1, 0)$ and $P_3=(0,0,1)$. 
Then we can assume that 
$Q_1=(x_1, y_1, z_1), Q_2=(x_2, y_2, z_2)$ 
where $0\leq x_i, y_i, z_i<1$ such that $x_i^2+y_i^2+z_i^2=1, i=1,2$. 
Since $Q_1\cdot Q_2=x_1x_2+y_1y_2+z_1z_2<1$, 
it follows that $\arccos(Q_1\cdot Q_2)<\pi/2$.
\end{proof}
If spherical convex body $C$ is a spherical polytope, it is nothing to prove. 
Hence, we assume that $C$ is locally strictly convex, namely, the boundary of $C$ always contains a spherical curve different from great circle arc.  
\par
{\bf Part I:} Construct a spherical convex body $C_\varepsilon^1$.
\par
We can parametrize $\partial C$ by a mapping 
$c: [0, 2\pi]\to \mathbb{S}^2$ such that $c(0)=c(2\pi)$ and $c(t_1)\neq c(t_2)$ for any two different numbers 
$t_1, t_2\in (0, 2\pi)$. 
Then, we could say that the curve $\widehat{R_1R_2}$ of $\partial C$ is oriented from $R_1$ to $R_2$. 
Let $T^\prime T^{\prime\prime}$ be a strictly convex part of $\partial C$ with endpoints $T^\prime,T^{\prime\prime}$.
We insert a finite points $P_1, \dots, P_l$ into $T^\prime T^{\prime\prime}$ such that 
$P_iP_{i+1}$ is a great circle arc of $\partial H(R_i)$ and 
\[
R_i\in B_s(C, \varepsilon)=\left\{ P\in \mathbb{S}^2\mid \inf_{Q\in C}\{ P\cdot Q<\varepsilon\}\right\}, 
\] 
for any $1\leq i \leq l-1$. 
Here, $T^\prime=P_1$ and $T^{\prime\prime}=P_l$. 
Next, we gradually construct the convex bodies $C_\varepsilon^i$ of constant width $\pi/2$ and 
remove these strictly convex parts $\widehat{P_iP}_{i+1}$.
Since $C$ is a body of constant width $\pi/2$, by Proposition \ref{diameter-width},  
there exist two points $Q_1, Q_2\in \partial C^\circ$ such that 
\[
\arccos(P_1\cdot Q_1)=\arccos(P_2\cdot Q_2)=\pi/2.
\] 
Since $\varepsilon$ can be a sufficiently small number, we may assume that 
$Q_1, Q_2$ are contained in $H(R_1)$ (see Figure \ref{figure1}).
Then we know that the great circle segment $R_1Q_1$ (resp. $R_1Q_2$) is a subset of the 
boundary of supporting hemisphere
$ H(P_1)$ (resp. $H(P_2)$) of $C$. 
This means that 
$R_1$ is a intersection point of $\partial H(P_1)\cap \partial H(P_2)$. 
We denote by $C_\varepsilon^1$ the spherical convex hull of 
\[
\left(\partial{C}\backslash \widehat{P_1P_2}\right)\cup P_1P_2\cup R_1.
\]
Then by the discussion above, we know that $\partial C_\varepsilon^1$ is the union
\[
\partial{C}\backslash \left(\widehat{P_1P_2}\cup \widehat{Q_1Q_2}\right) \cup P_1P_2\cup R_1Q_1\cup R_1Q_2, 
\] 
where $\widehat{Q_1Q_2}$ (counterclockwise  in the usual sense, see Figure \ref{figure1}) is the curve of $\partial C$ with beginning point $Q_1$ and endpoint $Q_2$.

\begin{figure}[htbp]\label{figure1}
  \includegraphics[width=8cm]{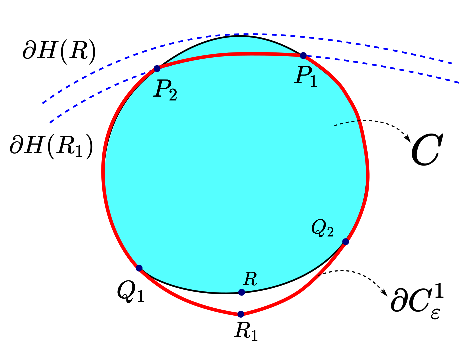}
  \caption{A spherical convex body $C_\varepsilon^1.$}
  \label{figure1}
\end{figure}
\par
By the construction, it is clear that the length of strictly convex part of 
$\partial C_\varepsilon^1$ is smaller than 
the length of strictly convex part of $\partial C$.
\par
\bigskip
{\bf Part II:} The spherical convex body $C_\varepsilon^1$ is of constant width $\pi/2$. 
\par
By Proposition \ref{diameter-width}, it is sufficient to prove $C_\varepsilon^1$ is of constant diameter $\pi/2$.
\begin{lemma}\label{lemmaintersect}
The chords $P_1Q_1, P_2Q_2$ of $C$ intersect each other.
\end{lemma}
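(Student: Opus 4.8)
The plan is to argue by contradiction: I will assume that the two chords $P_1Q_1$ and $P_2Q_2$ are disjoint and then produce a pair of points of $C$ at spherical distance exceeding $\pi/2$, which is impossible because $C$ has constant diameter $\pi/2$ (Proposition \ref{diameter-width}). First I record the metric data. By Proposition \ref{propself} the self-duality $C=C^\circ$ gives $\partial C^\circ=\partial C$, so $Q_1,Q_2\in\partial C$; hence all four points $P_1,P_2,Q_1,Q_2$ lie in $C$ and every pairwise spherical distance is at most $\pi/2$, while $\arccos(P_1\cdot Q_1)=\arccos(P_2\cdot Q_2)=\pi/2$ says the two chords have length exactly $\pi/2$.

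Next I would use the combinatorics of four points inside an open hemisphere (everything lies in the hemisphere containing $C$). If $P_1Q_1$ and $P_2Q_2$ do not meet, then in the convex hull of the four points these two arcs must occur as a pair of opposite sides of a convex spherical quadrilateral $V_1V_2V_3V_4$ (listed cyclically, with $\{V_1V_2,V_3V_4\}=\{P_1Q_1,P_2Q_2\}$), whose diagonals $V_1V_3$ and $V_2V_4$ therefore cross at an interior point $X$. Applying the spherical triangle inequality at $X$ exactly as in the Euclidean case,
\[
d(V_1,V_3)+d(V_2,V_4)=\big(d(V_1,X)+d(X,V_2)\big)+\big(d(V_3,X)+d(X,V_4)\big)\ge d(V_1,V_2)+d(V_3,V_4),
\]
with strict inequality since $X$ is interior to both diagonals. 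As $d(V_1,V_2)+d(V_3,V_4)=d(P_1,Q_1)+d(P_2,Q_2)=\pi$, at least one diagonal has length strictly greater than $\pi/2$; since each diagonal joins two points of $C$, this contradicts $\Delta(C)=\pi/2$, and so the chords must intersect.

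The step that needs care — and the main obstacle — is justifying that the non-crossing configuration really presents $P_1Q_1$ and $P_2Q_2$ as opposite sides of a convex quadrilateral, i.e. ruling out that one of the four points lies inside the spherical triangle spanned by the other three. Here I would exploit the sign data coming from constant diameter: one has $P_1\cdot P_2\ge 0$, $Q_1\cdot Q_2\ge 0$ and $P_i\cdot Q_j\ge 0$ for the mixed pairs, while $P_1\cdot Q_1=P_2\cdot Q_2=0$. Concretely, I would normalize coordinates by placing $R_1$ at the north pole; then $R_1\cdot P_1=R_1\cdot P_2=0$ puts $P_1,P_2$ on the equator, symmetric about a meridian and separated by an angle $2a<\pi/2$, and the relations $R_1,Q_i\in\partial H(P_i)$ let me write $Q_1,Q_2$ through angles $\theta_1,\theta_2$ along the great circles polar to $P_1,P_2$. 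The constraints $P_2\cdot Q_1\ge 0$ and $P_1\cdot Q_2\ge 0$ force $\theta_1,\theta_2\in[0,\pi/2]$, and a short computation of the scalar triple products $\det[P_1,Q_1,P_2]$ and $\det[P_1,Q_1,Q_2]$, which collapse to $-\sin 2a\cos\theta_1$ and $\sin\theta_1\cos\theta_2+\cos 2a\cos\theta_1\sin\theta_2$ respectively, shows that they have strictly opposite signs. This says that $P_2$ and $Q_2$ lie strictly on opposite sides of the great circle through $P_1Q_1$; by the symmetric computation $P_1$ and $Q_1$ are separated by the great circle through $P_2Q_2$. These two separations are precisely the crossing criterion for geodesic arcs inside an open hemisphere, so the chords meet. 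This coordinate route has the advantage of bypassing the convex-position discussion entirely, and it is the version I would ultimately write down.
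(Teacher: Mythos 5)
Your final (coordinate) version takes a genuinely different route from the paper's. The paper argues by contradiction and leans on self-duality: since $P_1,Q_1\in C=C^\circ$, the whole body satisfies $C\subset H(P_1)\cap H(Q_1)$, and the chord $P_1Q_1$ cuts this lune into the two spherical triangles $P_1Q_1R'$ and $P_1Q_1(-R')$ with $R'\in\partial H(P_1)\cap\partial H(Q_1)$; if the chords were disjoint, $P_2$ and $Q_2$ would lie in one and the same triangle, and Lemma \ref{lesspi/2} would give $\arccos(P_2\cdot Q_2)<\pi/2$, contradicting $\arccos(P_2\cdot Q_2)=\pi/2$. Note that this self-duality containment is exactly the fact that resolves the configuration problem you ran into in your first (quadrilateral) attempt -- it replaces any convex-position analysis -- and it makes no use of $R_1$ or of the assumption $Q_1,Q_2\in H(R_1)$. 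Your separation argument instead proves the crossing directly, which is a legitimate and even somewhat more informative alternative; I checked the determinants, and up to orientation conventions they are indeed $\pm\sin 2a\cos\theta_1$ and $\mp\left(\sin\theta_1\cos\theta_2+\cos 2a\cos\theta_1\sin\theta_2\right)$, of opposite (weak) sign precisely because $\cos 2a\geq 0$ and $\theta_1,\theta_2\in[0,\pi/2]$.

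Two steps, however, are not right as written. First, with your pole-based parametrization the constraints $P_2\cdot Q_1\geq 0$ and $P_1\cdot Q_2\geq 0$ read $\sin 2a\,\sin\theta_i\geq 0$, so they only force $\theta_i\in[0,\pi]$, not $\theta_i\in[0,\pi/2]$; the missing half-circle is excluded only by the standing assumption $Q_1,Q_2\in H(R_1)$ (i.e.\ $\cos\theta_i\geq 0$) made in Part I of the construction, which you must invoke explicitly -- it is not a consequence of the diameter bound, and without it your sign analysis does not go through (even though the lemma itself remains true, as the paper's proof shows). Second, the signs are only weakly opposite: $\sin 2a\cos\theta_1$ vanishes when $\theta_1=\pi/2$ (then $P_2$ lies on the great circle through $P_1,Q_1$), and the bracket vanishes, e.g., when $\theta_1=\theta_2=0$ (i.e.\ $Q_1=Q_2=R_1$) or when $2a=\pi/2$ and $\theta_1=0$. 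In these degenerate configurations the chords still intersect -- one endpoint of one chord lies on the other chord -- but your strict-separation criterion says nothing about them, so they require a separate, if easy, verification. Both gaps are patchable from material already in the paper, but as submitted the proof is incomplete at exactly these two points.
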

\begin{proof}
If $P_1Q_1, P_2Q_2$ disjoint each other, then $P_2,Q_2$ are two points of a spherical triangle 
$P_1Q_1R^\prime$, where $R^\prime\in \partial H(P_1)\cap \partial H(Q_1)$. 
By Lemma \ref{lesspi/2}, $\arccos(P_2\cdot Q_2)<\pi/2$.
This constradicts the fact that $\arccos(P_2\cdot Q_2)=\pi/2$. 
\end{proof} 
By the proof of Lemma \ref{lemmaintersect}, one obtains that any two chords of $C$ 
that are equal to the diameter intersect each other. 
Let $P$ be a relative interior point of $\widehat{P_2Q_1}$. 
Since $\arccos(P_1\cdot Q_1)=\pi/2$ and $C$ is a spherical convex body of constant diameter,
by Lemma \ref{lemmaintersect}, 
there exists a point $P^\prime\in \widehat{Q_1P_1}$ such that $\arccos(P\cdot P^\prime)=\pi/2$.
On the other hand, since $\arccos(P_2\cdot Q_2)=\pi/2$, applying Lemma \ref{lemmaintersect} again, 
we know that $PP^\prime$ and $P_2Q_2$ intersect each other.
Thus, we know that $P^\prime$ is a point of $\widehat{Q_2P_1}$.
Similarly, for any point $Q\in \widehat{Q_2P_1}$ there exists a point $Q^\prime\in \widehat{P_2Q_1}$ 
such that $\arccos(Q\cdot Q^\prime)=\pi/2$. 
From above discussion we know that $C_\varepsilon^1$ is a body of constant diameter $\pi/2$, that is for any point
$R\in \partial C_\varepsilon^1$ there exists a point $R^\prime \in \partial C_\varepsilon^1$ such that
$\arccos(R\cdot R^\prime)=\pi/2$.
\par
\bigskip
{\bf Part III:} $h(C,C_\varepsilon^1)<\varepsilon/2$.
\par
By the construction of Part I, it is clear that $C_\varepsilon^1$ is a subset of $B_s(C, \varepsilon)$.
\par
On the other hand, by the assumption $R_1\in B_s(C, \varepsilon)$, 
there exists a point $R\in \partial C$ such that $\arccos(R\cdot R_1)<\varepsilon$. 
This implies that the spherical convex hull of $P_1P_2\cup \widehat{P_1P_2}$ is a subset of the lune
$H(R)\cap H(R_1)$. 
Notice that the thickness of lune  $H(R)\cap H(R_1)$ is smaller than $\varepsilon$, that is 
\[
\Delta(H(R)\cap H(R_1))<\varepsilon.
\]
Then we know that for any point $P$ of $P_1P_2$ there exist a point 
$\widehat{P}\in \widehat{P_1P_2}$ such that $\arccos(P\cdot\widehat{P})<\varepsilon$ (see Figure 2).
This implies  $C_\varepsilon$ is a subset of $B_s(C_\varepsilon^1, \varepsilon)$.
Thus we conclude $h(C,C_\varepsilon^1)<\varepsilon$.
\begin{figure}[htbp]\label{figure3}
  \includegraphics[width=8cm]{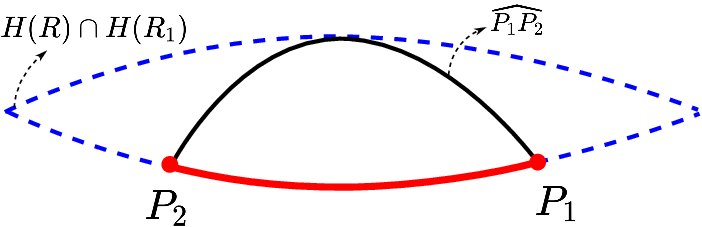}
 \caption{$\Delta(H(R)\cap H(R_1))<\varepsilon$.}
\end{figure}
\par
\bigskip
We repeat the steps in Part II.
And then, after a finite number steps we can remove the strictly convex part $T^\prime T^{\prime\prime}$ and obtain a spherical polytope $C_{\varepsilon, 1}$ of constant width $\pi/2$ such that 
\[
h(C, C_{\varepsilon, 1})<\varepsilon.
\]
If $C_{\varepsilon,1}$ is a spherical polytope, then put $\mathcal{P}_\varepsilon=C_{\varepsilon,1}$. 
In the opposite case, we replace $C_{\varepsilon,1}$ (resp. $\varepsilon$) with $C$ (resp. $\varepsilon/2$) and repeat the step in Part I. 
And then, after a finite number of $m$ steps we get a spherical polytope $C_{\varepsilon, m}$ of constant width $\pi/2$, and 
\[
h(C, C_{\varepsilon,m})<h(C, C_{\varepsilon,1})+h(C, C_{\varepsilon,2})+\dots+h(C, C_{\varepsilon,m})<\varepsilon+\frac{\varepsilon}{2}+\dots+\frac{\varepsilon}{2^m}<2\varepsilon.
\]
Put $\mathcal{P}_\varepsilon=C_{\varepsilon,m}$. By the arbitrariness of $\varepsilon$ , this completes the proof.
\bigskip
\par
{\bf Acknowledgements.}
This work was supported, in partial, by
Natural Science Basic Research Program of Shaanxi (Program No.2023-JC-YB-070).

\end{document}